\numberwithin{equation}{section}
\theoremstyle{plain}
\newtheorem{theorem}{Theorem}[section]
\newtheorem{lemma}[theorem]{Lemma}
\newtheorem{proposition}[theorem]{Proposition}
\newtheorem*{mtheorem}{Main Theorem}
\theoremstyle{definition}
\newtheorem{remark}[theorem]{Remark}
\newtheorem{open.problem}[theorem]{Open Problem}
\newcommand{\Leb}[1]{\mathcal{L}^{#1}} 
\newcommand{\N}{\mathbb{N}}
\newcommand{\R}{\mathbb{R}}
\title[Stability of the Gaussian Faber-Krahn inequality]{Stability of the Gaussian Faber-Krahn inequality}
\author[A.\ Carbotti]{Alessandro Carbotti}
\address{Dipartimento di Matematica
	e Fisica ``E. De Giorgi'', Universit\`a del Salento,
	Via Per Arnesano, 73100 Lecce, Italy.}
\email{alessandro.carbotti@unisalento.it}
\author[S.\ Cito]{Simone Cito}
\address{Dipartimento di Matematica
	e Fisica ``E. De Giorgi'', Universit\`a del Salento,
	Via Per Arnesano, 73100 Lecce, Italy.}
\email{simone.cito@unisalento.it}
\author[D. A. \ La Manna]{Domenico Angelo La Manna}
\address{Dipartimento di Matematica e Applicazioni “R. Caccioppoli”, Università degli Studi di Napoli ``Federico II'', Via Cintia, Monte S. Angelo, 80126 Naples, Italy}
\email{domenicoangelo.lamanna@unina.it}
\author[D.\ Pallara]{Diego Pallara}
\address{Dipartimento di Matematica
	e Fisica ``E. De Giorgi'', Universit\`a del Salento, and INFN, Sezione di Lecce,
	Via Per Arnesano, 73100 Lecce, Italy.}
\email{diego.pallara@unisalento.it}
\date{\today}  \linespread{1.2}
\keywords{Faber-Krahn inequality, First Dirichlet eigenvalue, Ornstein-Uhlenbeck operator, Gaussian analysis}
\subjclass[2010]{35P15, 49R05}
\begin{document}
	\begin{abstract}
		We prove a quantitative version of the Gaussian Faber-Krahn type inequali\-ty proved in \cite{BeChFe} for the first Dirichlet eigenvalue of the Ornstein-Uhlenbeck operator, estima\-ting the deficit in terms of the Gaussian Fraenkel asymmetry. As expected, the multiplicative constant only depends on the prescribed Gaussian measure.
	\end{abstract}
	
	\maketitle
	
	\tableofcontents
	
	\section{Introduction}
	\label{sec:intro}
	In the plethora of inequalities studied in shape optimization the Faber-Krahn type ones are classical issues: given a measure $\nu$ and a second order elliptic operator $L$ in divergence form in $L^2(\R^N;\nu)$, among all $\nu$-measurable sets $\Omega$ with fixed finite measure, there exists, up to some group of transformations, a unique set $\Omega_{\text{opt}}$ that minimizes the first Dirichlet eigenvalue $\lambda_L(\Omega)$ of a given domain $\Omega$. Namely,
	\begin{equation}
	\label{eq:introqual}
	D_L(\Omega):=\lambda_L(\Omega)-\lambda_L(\Omega_{\text{opt}})\ge 0,\qquad \nu(\Omega)=\nu(\Omega_{\text{opt}}).
	\end{equation}
	  Once the optimal set has been identified, one can try to prove the stability of inequality \eqref{eq:introqual} by quantifying how far a set is from being optimal for $\lambda_L$ in terms of some geometric asymmetry index $d(\Omega)$. More precisely, a quantitative enhancement of \eqref{eq:introqual} is
	\begin{equation}
	\label{eq:introquant}
	D_L(\Omega)\ge CG(d(\Omega)),
	\end{equation}
	where $C>0$ is a constant and $G:[0,+\infty)\rightarrow[0,+\infty)$ is some modulus of continuity.
	The classical works by Faber \cite{Faber} and Krahn \cite{Krahn} prove that if $\nu=\mathcal{L}^N$, $L=-\Delta$ and $\Omega$ is bounded then $\Omega_{\text{opt}}=B_R$ for $R=\left(\frac{\mathcal{L}^N(\Omega)}{\omega_N}\right)^{1/N}$.
	The study of the stability of the Faber-Krahn inequality for the first eigenvalue of the Dirichlet Laplacian started with the pioneering works \cite{HanNad, Melas}. The case in which the asymmetry index $d(\Omega)$ is the Fraenkel asymmetry $\mathcal{A}(\Omega):=\inf_{x\in\R^N}\frac{\mathcal{L}^N(\Omega\triangle B_R(x))}{\mathcal{L}^N(\Omega)}$ is a consequence of \cite[Theorem 2.1]{Bhatt} in the case $N=2$ and \cite[Theorem 1.1]{FuMaPr2} in the general case, with $G(r)=r^3$ and $G(r)=r^4$, respectively. Nevertheless, it had already been conjectured independently in \cite{BhaWei} and \cite{nadir} that the inequality should be true with $G(r)=r^2$, which is the expected sharpest power in inequalities like \eqref{eq:introquant} when $d(\Omega)=\mathcal{A}(\Omega)$
	. Actually, the stability of the Faber-Krahn inequality with $G(r)=r^2$ has been proved in \cite{BrDeVe} using the techniques developed in \cite{AcFuMo, CicLeo}. The sharpness of the quadratic power for the Faber-Krahn inequality when $d(\Omega)=\mathcal{A}(\Omega)$ is a known fact, see for instance \cite{BraDeP, BrDeVe, Fusco}. 
	When $\nu$ is the Gaussian measure $\gamma$ and $L$ is the Ornstein-Uhlenbeck operator $-\Delta_\gamma$ it is proved in \cite{BeChFe} that \eqref{eq:introqual} holds true with
	\[
	\Omega_{\text{opt}} = H_{\omega,r} = 
	\left\{x\in \mathbb{R}^N\quad\text{s.t.}\quad x\cdot\omega <r \right\},
	\]
	for some $\omega\in\mathbb{S}^{N-1}$ and for $r\in\R$ uniquely determined such that $\gamma(H_{\omega,r})=\gamma(\Omega)$.
A key tool used to prove optimality of halfspaces in the Gaussian setting is the notion of Ehrhard symmetrization introduced in \cite{EhrScand}. We notice that qualitative spectral inequalities in the Gaussian framework in which the optimal shape is the halfspace are also proved in \cite{ChiDib, ChiGav} under other boundary conditions. We finally point out that a wide class of quantitative weighted isoperimetric inequalities has been treated in \cite{FusLam}, in which the authors consider a class of log-convex weights that does not include the Gaussian one.
	
	The goal of this paper is to prove the quantitative inequality \eqref{eq:introquant} with $L=-\Delta_\gamma$, $G(r)=r^3$ and choosing as $d(\Omega)$ the Gaussian Fraenkel asymmetry. Nevertheless we conjecture that also in the Gaussian setting the power $3$ of the Fraenkel asymmetry can be replaced with the sharpest power $2$ as for the Gaussian perimeter (see \cite{BarBraJul}).

From now on, to simplify the notation we set $\lambda_\gamma=\lambda_{-\Delta_\gamma}$ and $D_\gamma=D_{-\Delta_\gamma}$. 
	
In order to state the Main Theorem, we introduce the Gaussian Fraenkel asymmetry of an open set $\Omega$, defined as
$$
\mathcal{A}_\gamma(\Omega):=\min_{\omega\in\mathbb{S}^{N-1}}\frac{\gamma(\Omega\triangle H_{\omega,r})}{\gamma(\Omega)},
$$
where the halfspaces
$$
H_{\omega,r}:=\left\{x\in \mathbb{R}^N\quad\text{s.t.}\quad x\cdot\omega <r \right\}
$$
have the same Gaussian measure of $\Omega$.

\begin{mtheorem}
	\label{th:maintheorem}
	Let $N\geq 1$ and $m\in (0,1)$. For any open set $\Omega$ with $\gamma(\Omega)=m$ we have
	\begin{equation} \label{eq:main}
	D_\gamma (\Omega):=\lambda_\gamma (\Omega)- \lambda_\gamma(H) \geq  C_{m} \mathcal{A}_\gamma(\Omega)^3,
	\end{equation}
	where $H$ is any halfspace with $\gamma(H)=\gamma (\Omega)$ and $C_{m}$ is a positive constant which depends only on $m$.
\end{mtheorem}	
	
	Inequalities of isoperimetric type in the Gaussian setting have been proved in \cite{borell, CarKer, ehrhard, SudCir}, in \cite{AmbMon} in the nonsmooth context of $\text{RCD}(K,\infty)$ spaces that generalize the Gauss space as metric measure spaces, and in \cite{NovPalSir} for a fractional perimeter in the infinite-dimensional setting of abstract Wiener spaces, while the stability has been faced in \cite{BarBraJul, CiFuMaPr, JulSar} and in \cite{CCLP} also in the fractional setting. See Section \ref{sec:sezdue} for all the missing definitions.
	
 The paper is organized as follows:
 in Section \ref{sec:sezdue}, after introducing some notation,  we recall some properties of eigenvalues and eigenfunction of the Dirichlet-Ornstein Uhlenbeck operator (Subsection \ref{sec:subdue}) and we prove that the Gaussian Faber-Krahn profile enjoys some useful regularity properties (Subsection \ref{sec:subbdue}). In Section \ref{sec:seztre} we delve into the proof of our Main Theorem.  
 
We follow the strategy introduced by Hansen and Nadirashvili in \cite{HanNad}. We exploit a quantitative version of the P\'{o}lya-Szeg\"{o} inequality in the Gaussian framework joint with the sharp quantitative isoperimetric inequality proved in \cite{BarBraJul} to control the propagation of the asymmetry of the level sets (see Proposition \ref{prop:quattroquattronostra}). 

 We notice that the techniques in the proof of our Main Theorem seem to be flexible enough to be used in the fractional context through an extension procedure \`{a} la Caffarelli-Silvestre as in \cite{BraCinVit, CCLP}. We also point out that in \cite{BrDeVe} the stability for the scale invariant functional
$$
F(\Omega):=|\Omega|^{2/N}\lambda_{-\Delta}(\Omega)
$$
has been proved. Since the function $t\mapsto t^{-2/N}$ is exactly the Faber-Krahn profile for the first eigenvalue of the Dirichlet Laplacian, in the same vein we can state our stability result for the functional
$$
F_\gamma(\Omega):=\frac{\lambda_\gamma(\Omega)}{g(\gamma(\Omega))}
$$
even though in the Gaussian framework the scale invariance of $F_\gamma$ does not hold. Here, setting 
$$
\Phi(r):=\frac{1}{\sqrt{2\pi}}\int_{-\infty}^r e^{-\frac{t^2}{2}}\: dt,\qquad r\in\R,
$$
we define $g(m):=\lambda_\gamma(H_{\omega,\Phi^{-1}(m)})$, see Section \ref{sec:sezdue}.

\paragraph*{\bf Acknowledgements}  
The authors are member of GNAMPA of the Istituto Nazionale di Alta Matematica (INdAM). 
A.C. and S.C., D.A.L., D.P. respectively acknowledge the support of the INdAM - GNAMPA 2023 Projects ``Problemi variazionali per funzionali e operatori non-locali'', ``Disuguaglianze isoperimetriche e spettrali'', ``Equazioni differenziali stocastiche e operatori di Kolmogorov in dimensione infinita''. A.C., S.C. and D.A.L. acknowledge the support of the INdAM - GNAMPA 2024 Project ``Ottimizzazione e disuguaglianze funzionali per problemi geometrico-spettrali locali e nonlocali''.
A.C., S.C. and D.P. have been also partially supported by the PRIN 2022 project 20223L2NWK. D.A.L. has been also partially supported by the PRIN 2022 project 2022E9CF89.

\section{Notation and preliminary results}
\label{sec:sezdue}

For $N\in\N$ we denote by $\gamma_N$ and $\mathcal{H}^{N-1}_\gamma$ the Gaussian measure on $\R^N$ and the 
$(N-1)$-Hausdorff Gaussian measure
\begin{align*}
\gamma_N&:=\frac{1}{(2\pi)^{N/2}}e^{-\frac{|\cdot|^2}{2}}\Leb{N},
\\
\mathcal{H}^{N-1}_\gamma&:=\frac{1}{(2\pi)^{(N-1)/2}}e^{-\frac{|\cdot|^2}{2}}\mathcal{H}^{N-1},
\end{align*}
where $\Leb{N}$ and $\mathcal{H}^{N-1}$ are the Lebesgue measure and the Euclidean $(N-1)$-dimensional Hausdorff measure, respectively. 
When $k\in \{1,\dots, N\}$ is a given integer, we denote by $\gamma_k$ the standard $k$-dimensional Gaussian measure in $\R^k$; when there is no ambiguity we simply write $\gamma$ instead of $\gamma_N$. 

The Gaussian perimeter of a measurable set $E$ in an open set $\Omega$ is defined as
$$P_\gamma(E;\Omega)=\sqrt{2\pi}\:\sup\left\{\int_E\left(\text{div}\,\varphi-\varphi\cdot x\right)\:d\gamma(x):\varphi\in C^\infty_c(\Omega;\R^N),\ \|\varphi\|_\infty\le 1\right\}.$$
If $\Omega=\R^N$, we denote the Gaussian perimeter of $E$ in the whole $\R^N$ simply by $P_\gamma(E)$. Moreover, if $E$ has finite Gaussian perimeter, then $E$ has locally finite Euclidean perimeter and it holds
$$
P_\gamma(E)=\mathcal{H}^{N-1}_\gamma(\partial^\ast E)=\frac{1}{(2\pi)^{\frac{(N-1)}{2}}}
\int_{\partial^\ast E}e^{-\frac{|x|^2}{2}}d\mathcal{H}^{N-1}(x),
$$
where $\partial^\ast E$ is the reduced boundary of $E$. We refer to \cite{AFP} for the properties of 
sets with finite perimeter. 

We introduce the strictly increasing function $\Phi:\mathbb{R}\rightarrow (0,1)$ by 
$$
\Phi(r):=\int_{-\infty}^r d\gamma_1(t),
$$
and its inverse $\Phi^{-1}:(0,1)\rightarrow \mathbb{R}$.
Defining, for $\omega\in\mathbb{S}^{N-1}$ and $r\in\mathbb{R}$, $H_{\omega,r}$ the halfspace
$$
H_{\omega,r}:=\left\{x\in \mathbb{R}^N\quad\text{s.t.}\quad x\cdot\omega <r \right\},
$$ 
we have 
$$
\gamma(H_{\omega,r})=\Phi(r)
$$
and
$$
P_\gamma(H_{\omega,r})=e^{-r^2/2}.
$$
Moreover, the Gaussian perimeter of any halfspace with Gaussian volume $m\in (0,1)$ is given by
\begin{equation}
\label{eq:isoperimetricfunction}
I(m):=e^{-\frac{\Phi^{-1}(m)^2}{2}},
\end{equation}
where $I:(0,1)\rightarrow(0,1)$ is usually called \emph{isoperimetric function}. The Gaussian isoperimetric inequality reads 
\begin{equation}
\label{eq:gaussisopine}
P_\gamma(E) \ge I(\gamma(E)),
\end{equation}
and halfspaces are the unique (see \cite{CarKer}) volume constrained minimizers of the Gaussian perimeter. A sharp stability result for \eqref{eq:gaussisopine} has been obtained in \cite{BarBraJul} and it reads 
	\begin{equation}
	\label{eq:cifumapr}
	P_\gamma(E) - I(\gamma(E))= P_\gamma(E)-e^{-\frac{r^2}{2}} \ge 
	\frac{e^{\frac{r^2}{2}}}{4c(1+r^2)} \mathcal{A}_\gamma(E)^2,
	\end{equation}
for any set $E$ such that $\gamma(E)=m=\Phi(r)$ and for some absolute constant $c>0$.

Following \cite{EhrScand}, we introduce a suitable notion of symmetrization in the Gauss space. 
First, for any $J\subset\R$ we set 
\begin{equation}\label{defI*}
J^*=(-\infty, \Phi^{-1}(\gamma_1(J))) .
\end{equation}
Then, for $h\in \R^N$ with $|h|=1$, we consider the projection
$x'=x-(x\cdot h) h$ and write $x=x'+th$ with $t\in \R$,  
and for every measurable function $u:\R^N\to\R$ we define the symmetrized function in the sense of Ehrhard
\begin{equation}\label{defu*}
u_h^*(x'+th)=\sup\Bigl\{c\in {\mathbb R}:\ t\in\{u(x',\cdot)>c\}^*\Bigr\}. 
\end{equation}
The Gaussian rearrangement of a set is a set with the same measure whose sections in the direction $h$ are halflines, and the superlevel sets of the rearrangement $u^*$ of a function $u$ with respect to a direction $h$ have the same shape. 
Notice that if $u$ is (weakly) differentiable, $u^*_h$ is (weakly) differentiable as well and the inequality
\[
\int_{\R^N} |\nabla u^*_h(x)|^2\, d\gamma(x) \leq \int_{\R^N}|\nabla u(x)|^2\, d\gamma(x)
\]
holds, see \cite[Theorem 3.1]{ehrhard} for the Lipschitz case; the Sobolev case easily follows by approximation. 
Since symmetrization preserves the class of characteristic functions, for every measu\-rable set $\Omega\subset \R^N$
we may define the Ehrhard-symmetrized set $\Omega_h^*$ through the equality 
\[
\chi_{\Omega_h^*}=(\chi_\Omega)_h^* .
\]
We define the {\em Gaussian Fraenkel asymmetry} and the {\em Gaussian Faber-Krahn deficit} of 
a set $\Omega$ as
$$
\mathcal{A}_\gamma(\Omega):=\min_{\omega\in\mathbb{S}^{N-1}}\frac{\gamma(\Omega\triangle H_{\omega,r})}{\gamma(\Omega)},
$$
and
$$
D_{\gamma}(\Omega):=\lambda_\gamma(\Omega)-\lambda_{\gamma}(H_{\omega,r}),
$$
where $\triangle$ stands for the symmetric difference, $\lambda_\gamma(\Omega)$ is the 
{\em first Dirichlet eigenvalue of the Ornstein-Uhlenbeck operator} with respect to the domain $\Omega$, see Subsection \ref{sec:subdue}, and $r=\Phi^{-1}(\gamma(\Omega))$. 
These definitions are motivated 
by the fact that halfspaces are the optimal sets for the Gaussian Faber-Krahn problem as well, see 
\cite{BeChFe}. In particular, we can rephrase the statement of \cite[Theorem 3.1]{BeChFe} without assuming the volume constraint by stating that for any measurable set it holds that
\begin{equation}
\label{eq:analyticfk}
\frac{\lambda_\gamma(\Omega)}{g(\gamma(\Omega))}\ge \frac{\lambda_\gamma(H_{\omega,r})}{g(\gamma(H_{\omega,r}))}=1,
\end{equation}
where the function $g:[0,1)\rightarrow[0,+\infty)$ defined by
\[ 
g(m)=\lambda_\gamma(H_{\omega,\Phi^{-1}(m)}) 
\]
is nonnegative and strictly decreasing, see \cite{ehrhard}. In particular for any measurable set $\Omega$ we have that $\lambda_\gamma(\Omega)\ge g(\gamma(\Omega))$ and the equality holds if and only if $\Omega=H_{\omega,r}$ for some $\omega\in\mathbb{S}^{N-1}$ and $r$ such that $\gamma(H_{\omega,r})=\gamma(\Omega)$. From now on we refer to the function $g$ as the \textit{Gaussian Faber-Krahn profile}.

We recall that in the Gaussian case the Ornstein-Uhlenbeck operator $\Delta_\gamma$ defined
for $u$ sufficiently smooth as
$$
(\Delta_\gamma u)(x):=(\Delta u)(x)- x\cdot\nabla u(x),
$$
plays in the Gaussian setting the same 
role as the Laplacian in the Euclidean one.

\subsection{Properties of eigenvalues and eigenfunctions of $-\Delta_\gamma$}\label{sec:subdue}

In the sequel we denote $H^1(\Omega,\gamma)$ the subspace of the functions $u\in L^2(\R^N,\gamma)$ such that $\|\nabla u\|_{L^2(\Omega,\gamma)}$ is finite, and we denote by $H^1_0(\Omega,\gamma)$ the completion of $C^\infty_c(\Omega)$ with respect to this norm (notice that $\|\nabla \cdot\|_{L^2(\Omega,\gamma)}$ is actually a norm in $C_c^\infty(\Omega)$).

The first Dirichlet eigenvalue of the Ornstein - Uhlenbeck (or, briefly, the first Gaussian Dirichlet eigenvalue) is the smallest real number $\lambda$ such that
\begin{equation}
\label{eq:pdeOUeigen}
\begin{cases}
-\Delta_\gamma u=\lambda u\quad\text{in}\quad\Omega \\
u=0\quad\text{on}\quad\partial\Omega
\end{cases}
\end{equation}
admits a nontrivial solution in $H^1_0(\Omega,\gamma)$. From now on we denote such eigenvalue by $\lambda_\gamma(\Omega)$, and we call any nontrivial solution of \eqref{eq:pdeOUeigen} a {\em first eigenfunction of $\Omega$}.

We notice that \eqref{eq:pdeOUeigen} has a variational formulation. Indeed, any weak solution of \eqref{eq:pdeOUeigen} verifies

\begin{equation}
\label{eq:variationalformulation}
\int_{\Omega}\nabla u \cdot \nabla \varphi\: d\gamma=\lambda\int_\Omega u\varphi\:d\gamma,
\end{equation}
for any $\varphi\in H^1_0(\Omega,\gamma)$.

Therefore, it is not difficult to see that $\lambda_\gamma(\Omega)$ admits the following characterization
\begin{equation}\label{eq:1eigen}
\lambda_\gamma(\Omega)=\min_{u\in H^1_0(\Omega,\gamma)}\frac{\displaystyle\int_{\Omega}|\nabla u|^2\:d\gamma}{\displaystyle\int_{\Omega}u^2\:d\gamma}=\min_{\substack{u\in H^1_0(\Omega,\gamma) \\ \|u\|_{L^2(\Omega,\gamma)}=1}}\int_{\Omega}|\nabla u|^2\:d\gamma,
\end{equation}
and the minimum is achieved on any eigenfunction $u_\Omega$. 


Moreover, by standard spectral theory the eigenvalues of $-\Delta_\gamma$ form an increasing sequence
$$
0<\lambda_{\gamma,1}:=\lambda_\gamma\le\lambda_{\gamma,2}\le\cdots\le\lambda_{\gamma,k}\le\lambda_{\gamma,k+1}\le\cdots,
$$
with $\lambda_{\gamma,k}\rightarrow +\infty$ as $k\to +\infty$,
and for any $k\in\N$, $\lambda_{\gamma,k}$ has the following variational characterization
$$
\lambda_{\gamma,k}(\Omega)=\min_{u\in\mathbb{P}^k}\frac{\displaystyle\int_{\Omega}|\nabla u|^2\:d\gamma}{\displaystyle\int_{\Omega}u^2\:d\gamma}=\min_{\substack{u\in \mathbb{P}^k \\ \|u\|_{L^2(\Omega,\gamma)}=1}}\int_{\Omega}|\nabla u|^2\:d\gamma
$$
where
$$
\mathbb{P}^k:=\left\{u\in H^1_0(\Omega,\gamma)\quad\text{s.t.}\quad\left\langle u,u_{\Omega,j}\right\rangle=0\quad\forall j=1,\ldots,k-1\right\},
$$
and the minimum is attained in $u=u_{\Omega,k}$ where we have set $u_{\Omega,1}:=u_\Omega$.

The next Lemma is very classical and provides some useful properties of the first Dirichlet eigenvalue and eigenfunction of $-\Delta_\gamma$.

\begin{lemma}
Let $\Omega\subset\R^N$ be an open connected set with $\gamma(\Omega)<1$. Then, we have that
	\begin{enumerate}
		\item the first eigenfunction $u_\Omega$ is analytic and it does not change sign in $\overline{\Omega}$;
		\item the first eigenvalue $\lambda_\gamma(\Omega)$ is simple.
	\end{enumerate}
\end{lemma}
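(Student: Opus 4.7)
The plan is to derive both parts of the lemma from classical elliptic PDE theory applied to the Ornstein--Uhlenbeck operator. The key observation is that the equation $-\Delta u + x\cdot\nabla u = \lambda u$ has principal symbol $-\Delta$ and first-order coefficients $x$ that are real analytic (polynomial, in fact), so the operator is uniformly elliptic on every compact subset of $\Omega$ and its coefficients are analytic everywhere.

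For the analyticity in (1), I would first note that since the Gaussian weight $e^{-|x|^2/2}$ is smooth and bounded away from $0$ on every relatively compact subset of $\Omega$, the weighted space $H^1_{\mathrm{loc}}(\Omega,\gamma)$ locally coincides with the unweighted $H^1_{\mathrm{loc}}(\Omega)$, so \eqref{eq:variationalformulation} is, locally, just a standard weak formulation of the elliptic PDE above. A classical elliptic bootstrap (interior $W^{2,p}$ estimates followed by Schauder theory) then gives $u_\Omega \in C^\infty(\Omega)$. Real analyticity follows from the Morrey--Nirenberg theorem on analytic regularity for linear elliptic equations with analytic coefficients.

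For the non-sign-changing property, I would use the Rayleigh quotient trick: since $|u_\Omega|\in H^1_0(\Omega,\gamma)$ with $|\nabla |u_\Omega||=|\nabla u_\Omega|$ a.e., the function $|u_\Omega|$ also attains the minimum in \eqref{eq:1eigen} and hence is itself a first eigenfunction of \eqref{eq:pdeOUeigen}. As a nonnegative weak solution of a locally uniformly elliptic linear equation, $|u_\Omega|$ satisfies the strong maximum principle on each relatively compact connected subdomain; patching these together on the connected set $\Omega$ shows that either $|u_\Omega|>0$ throughout $\Omega$ or $|u_\Omega|\equiv 0$. The latter is excluded by nontriviality, so $u_\Omega$ has constant sign. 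Simplicity in (2) is then immediate: given two first eigenfunctions $u,v$, we may assume both positive in $\Omega$ by (1); for any $x_0\in\Omega$ the function $w:=u-\frac{u(x_0)}{v(x_0)}v$ is a first eigenfunction or identically zero, but $w(x_0)=0$ combined with the constant-sign conclusion just established forces $w\equiv 0$, so $u$ and $v$ are proportional.

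The main obstacle I anticipate is just bookkeeping rather than any substantive difficulty: because $\Omega$ may be unbounded in the Gaussian setting, the first-order coefficient $x$ is unbounded too, but since analyticity, the strong maximum principle, and the Rayleigh argument are all local or semilocal statements, they apply on relatively compact subdomains and then extend to $\Omega$ by connectedness. No genuinely new argument beyond the Euclidean case is required.
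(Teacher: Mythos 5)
The paper gives no proof of this lemma at all: it is simply labelled ``very classical'' and stated without justification, with only a short remark afterwards extracting the consequences that are actually used (absolute continuity of the distribution function $t\mapsto\gamma(\{u_\Omega>t\})$ and the identification of the level sets' reduced boundary). So there is no in-paper argument to compare against; the comparison must be with what the authors presumably have in mind by ``classical.''

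Your proposal is correct and is essentially the expected classical argument. The structure is sound: localize to relatively compact subsets of $\Omega$, where the Gaussian weight is bounded and bounded away from zero, so that the Ornstein--Uhlenbeck eigenvalue equation $-\Delta u + x\cdot\nabla u = \lambda u$ becomes a uniformly elliptic equation with real-analytic coefficients; then elliptic bootstrap plus Morrey--Nirenberg give interior analyticity. The Rayleigh-quotient trick with $|u_\Omega|$ (using $|\nabla|u_\Omega||=|\nabla u_\Omega|$ a.e.\ and that $H^1_0(\Omega,\gamma)$ is closed under taking absolute values) shows $|u_\Omega|$ is again a first eigenfunction; the strong maximum principle on relatively compact connected subdomains, patched over the connected set $\Omega$, gives strict positivity of $|u_\Omega|$, hence constant sign of $u_\Omega$. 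Simplicity then follows by the standard linear-combination argument: $w:=u-\frac{u(x_0)}{v(x_0)}v$ solves the eigenvalue equation, vanishes at $x_0$, and would have constant nonzero sign if nontrivial, so $w\equiv 0$. The only thing left tacit in your write-up is that the eigenfunction exists in the first place (via compactness of $H^1_0(\Omega,\gamma)\hookrightarrow L^2(\Omega,\gamma)$, which uses $\gamma(\Omega)<1$), but the lemma's statement already presupposes its existence, so this is a reasonable omission.
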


\begin{remark}
	By the analyticity of $u_\Omega$ it follows that the function $t\mapsto\gamma\left(\left\{u_\Omega>t\right\}\right)$ is absolutely continuous and $\partial^\ast\left\{u_\Omega>t\right\}=\partial\left\{u_\Omega>t\right\}=\{u_\Omega=t\}$.
\end{remark}

\subsection{Local bilipschitz continuity of the Faber-Krahn profile}
\label{sec:subbdue} 

We now prove a regularity result for $g$ that is crucial in the proof of our Main Theorem. To do this we quote the following technical result from 
\cite{Lieb}, see Theorem 1.13 and Corollary 1.15.
\begin{theorem} \label{th:lieb}
Let $V:\R\to\R$ be convex, let $C_0$, $C_1$ two nonempty intervals and $C_\tau:=\tau C_1+(1-\tau)C_0$, $\tau\in[0,1]$. If $\lambda(\tau)$ is the first  Dirichlet eigenvalue of the Schr\"odinger operator $\mathcal{H}_V:=-D^2+V$ on $C_\tau$, namely
	$$
	\begin{cases}
	\mathcal{H}_Vw=\lambda(\tau) w\quad\text{in}\quad C_\tau \\
	w=0\quad\text{in}\quad\partial C_\tau,
	\end{cases}
	$$
	then $\lambda$ is a convex function with respect to $\tau\in[0,1]$.
\end{theorem}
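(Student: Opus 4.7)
The plan is to prove the convexity inequality $\lambda(\tau)\le(1-\tau)\lambda(0)+\tau\lambda(1)$ for every $\tau\in(0,1)$ (from which convexity on all of $[0,1]$ follows by rescaling). The strategy is to construct a good test function on $C_\tau$ out of positive ground states on $C_0$ and $C_1$, and to estimate its Rayleigh quotient using a Brunn--Minkowski / Pr\'ekopa--Leindler type interpolation.

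Let $u_0>0$, $u_1>0$ be first Dirichlet eigenfunctions of $\mathcal{H}_V$ on $C_0,C_1$, normalized so that $\int u_i^2=1$ and hence $\int((u_i')^2+Vu_i^2)=\lambda(i)$ for $i=0,1$. A classical ingredient due to Brascamp and Lieb is that, since $V$ is convex, $\log u_0$ and $\log u_1$ are concave on the interiors of $C_0,C_1$ and tend to $-\infty$ at the endpoints. On $C_\tau=(1-\tau)C_0+\tau C_1$ I would then define
\begin{equation*}
u_\tau(y):=\sup\bigl\{u_0(x_0)^{1-\tau}\,u_1(x_1)^{\tau}\ :\ (x_0,x_1)\in C_0\times C_1,\ y=(1-\tau)x_0+\tau x_1\bigr\},
\end{equation*}
which lies in $H^1_0(C_\tau)$ and is itself log-concave. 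The one-dimensional Pr\'ekopa--Leindler inequality applied to the pair $(u_0^2,u_1^2)$ immediately yields
\begin{equation*}
\int_{C_\tau}u_\tau^2\ \ge\ \Bigl(\int_{C_0}u_0^2\Bigr)^{1-\tau}\Bigl(\int_{C_1}u_1^2\Bigr)^{\tau}\ =\ 1.
\end{equation*}

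For the numerator of the Rayleigh quotient I would split into potential and kinetic parts. The potential part is easy: at the (unique) maximizer $(x_0(y),x_1(y))$ of the defining sup, convexity of $V$ gives $V(y)\le(1-\tau)V(x_0)+\tau V(x_1)$, and Pr\'ekopa--Leindler applied to $Vu_0^2$ and $Vu_1^2$ bounds $\int_{C_\tau}Vu_\tau^2$ from above by a convex combination of $\int_{C_i}Vu_i^2$. The kinetic part is the delicate point: differentiating the defining supremum and exploiting first-order optimality gives
\begin{equation*}
\frac{u_\tau'(y)}{u_\tau(y)}=\frac{u_0'(x_0(y))}{u_0(x_0(y))}=\frac{u_1'(x_1(y))}{u_1(x_1(y))},
\end{equation*}
after which a change of variables $y\mapsto x_i(y)$ -- whose Jacobian is strictly positive thanks to the strict log-concavity supplied by Brascamp--Lieb (possibly after an $\eps$-regularization $V\rightsquigarrow V+\eps\,x^2$) -- rewrites $\int_{C_\tau}(u_\tau')^2$ as a convex combination of $\int_{C_0}(u_0')^2$ and $\int_{C_1}(u_1')^2$.

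Summing these estimates gives $\int_{C_\tau}((u_\tau')^2+Vu_\tau^2)\le(1-\tau)\lambda(0)+\tau\lambda(1)$, and combining with $\int u_\tau^2\ge 1$ and the variational characterization of $\lambda(\tau)$ yields the desired inequality after letting $\eps\to 0^+$ if needed. The main obstacle is precisely the Jacobian change-of-variables in the kinetic estimate: the identity $u_\tau'/u_\tau=u_i'/u_i$ is only \emph{formal} at points where the supremum is not uniquely attained, which corresponds to possible degeneracies of the Hessian of $\log u_i$. A strict log-concavity regularization -- or, alternatively, an infimal-convolution/viscosity formulation combined with a.e.\ differentiation of monotone maps -- is needed to make the argument rigorous, and this is the route taken in Lieb's original proof.
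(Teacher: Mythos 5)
The paper does not actually prove this theorem: it is quoted verbatim from Brascamp--Lieb (their Theorem~1.13 and Corollary~1.15 in the cited reference), so there is no in-paper proof to compare against. Reviewing your proposal on its own merits, the route you take -- sup-convolution of the ground states plus a Rayleigh-quotient estimate -- is \emph{not} the Brascamp--Lieb argument, and as written it contains a genuine gap that you partly acknowledge but underestimate.

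The central problem is that Pr\'ekopa--Leindler only produces \emph{lower} bounds: if a function dominates the geometric interpolation of two others along Minkowski averages, its integral dominates the geometric mean of their integrals. It never gives an upper bound on the integral of the interpolant. Thus your treatment of the potential term -- ``Pr\'ekopa--Leindler applied to $Vu_0^2$ and $Vu_1^2$ bounds $\int_{C_\tau} V u_\tau^2$ from above'' -- is backwards as a matter of logic, independent of the sign of $V$. The kinetic term has the same structural issue: after using the first-order optimality relation $u_\tau'/u_\tau = u_i'(x_i)/u_i(x_i)$, one is left with $\int_{C_\tau} [(u_0')^2(T_0(y))]^{1-\tau}[(u_1')^2(T_1(y))]^\tau\,dy$, and neither AM--GM plus a change of variables $y\mapsto T_i(y)$ nor H\"older cleanly bounds this by $(1-\tau)\int(u_0')^2+\tau\int(u_1')^2$, because the Jacobians $T_i'$ interpolate as $(1-\tau)T_0'+\tau T_1'=1$ rather than being identically $1$. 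If one actually carries out the change of variables using the parametrization by $s=v_\tau'(y)$ and the harmonic-mean relation $|v_\tau''|^{-1}=(1-\tau)|v_0''|^{-1}+\tau|v_1''|^{-1}$, the resulting pointwise inequality reduces to a sign condition comparing $u_1^2-u_0^2$ with $|v_0''|^{-1}-|v_1''|^{-1}$ which has no reason to hold. So the numerator estimate does not close, and the $\eps$-regularization you invoke does not address this -- it only handles degeneracy of the sup, not the direction of the inequality.

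The actual Brascamp--Lieb proof is of a different nature: they establish that the integral kernel of the Schr\"odinger semigroup $e^{-t\mathcal{H}_V}$ with Dirichlet conditions on a convex interval is log-concave jointly in its variables (via a Trotter product decomposition into convolution with Gaussians and multiplication by $e^{-tV/n}$, both of which preserve log-concavity), and then read off the eigenvalue interpolation by applying Pr\'ekopa--Leindler to these kernels and sending $t\to\infty$ in $\lambda=-\lim_{t\to\infty}t^{-1}\log\langle\psi,e^{-t\mathcal{H}_V}\psi\rangle$. In that argument Pr\'ekopa--Leindler is used in the direction in which it actually works (a lower bound on a quantity that behaves like $e^{-t\lambda(\tau)}$), which is why the sign issue you run into never arises. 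Your proposal would need to be recast along these lines; as it stands the core kinetic and potential estimates are not justified.
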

We are now ready to prove the following
\begin{proposition}
	\label{prop:bilipschitz}
	The Gaussian Faber-Krahn profile $g$ is invertible and locally bilipschitz continuous.
\end{proposition}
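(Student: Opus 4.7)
\medskip

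\noindent\emph{Proof sketch.} Invertibility of $g$ is immediate from the strict monotonicity recalled just above the statement (the fact that $g$ is strictly decreasing is already cited from \cite{ehrhard}). So the whole game is local bilipschitz continuity, and it is natural to reduce it to studying $r\mapsto \lambda_\gamma(H_{\omega,r})$ since $g=\lambda_\gamma(H_{\omega,\Phi^{-1}(\cdot)})$ and $\Phi^{-1}$ is a smooth diffeomorphism $(0,1)\to\R$ with nonvanishing derivative.

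The plan is to reduce the eigenvalue problem on the halfspace to a one-dimensional Schr\"odinger problem with convex potential, so that Theorem \ref{th:lieb} applies. Concretely, by invariance of the Ornstein–Uhlenbeck operator under orthogonal transformations and by separation of variables on $H_{\omega,r}$, the first Dirichlet eigenfunction depends only on $t=x\cdot\omega$, and the eigenvalue $\lambda(r):=\lambda_\gamma(H_{\omega,r})$ coincides with the first Dirichlet eigenvalue on $(-\infty,r)$ of the weighted operator $-v''+tv'$ in $L^2((-\infty,r),\gamma_1)$. The unitary substitution $v(t)=e^{t^2/4}u(t)$ turns this into the standard Schr\"odinger eigenvalue problem
\[
-u''(t)+V(t)u(t)=\lambda(r)\,u(t)\quad\text{on }(-\infty,r),\qquad u(r)=0,
\]
with the convex potential $V(t)=\tfrac{t^2}{4}-\tfrac12$.

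Now I would invoke Theorem \ref{th:lieb} with $C_0=(-\infty,r_0)$, $C_1=(-\infty,r_1)$, so that the Minkowski combination is $C_\tau=(-\infty,\tau r_1+(1-\tau)r_0)$. Since $V$ is convex, Lieb's theorem yields convexity of $\tau\mapsto \lambda(\tau r_1+(1-\tau)r_0)$, hence $r\mapsto \lambda(r)$ is convex on $\R$. A convex function on an open interval is locally Lipschitz, and its left and right derivatives exist everywhere and are bounded on compact subsets; combined with strict monotonicity (standard domain monotonicity of Dirichlet eigenvalues), we get $-\infty<\lambda'_-(a)\le \lambda'_\pm(r)\le \lambda'_+(b)<0$ for every $r\in[a,b]\subset\R$. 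Thus $\lambda$ is locally bilipschitz on $\R$.

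Finally, composing with $\Phi^{-1}:(0,1)\to \R$, which is $C^\infty$ with derivative $(\Phi^{-1})'(m)=\sqrt{2\pi}\,e^{\Phi^{-1}(m)^2/2}$ strictly positive and locally bounded above and below on compact subintervals of $(0,1)$, one obtains
\[
|g'(m)|=|\lambda'(\Phi^{-1}(m))|\,(\Phi^{-1})'(m),
\]
(interpreted as one-sided derivatives where necessary), with both factors locally bounded above and below away from $0$ and $\infty$ on any $[m_1,m_2]\subset(0,1)$. This yields the desired local bilipschitz continuity of $g$. The one delicate step I would want to double-check carefully is the applicability of Theorem \ref{th:lieb} to the unbounded intervals $C_\tau$ (arguing by exhaustion with bounded intervals $(-R,r)$ and passing to the limit $R\to\infty$ using stability of the first Dirichlet eigenvalue), since the statement in \cite{Lieb} is usually given for bounded intervals.
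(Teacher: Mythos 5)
Your proof is correct and arrives at the same key ingredient as the paper --- the reduction to a one-dimensional Schr\"odinger problem with the convex potential $V(\rho)=\rho^2/4-1/2$ and the application of Theorem~\ref{th:lieb} to deduce convexity of $r\mapsto\lambda_\gamma(H_r)$ --- but you use convexity to obtain both directions of the bilipschitz estimate in one stroke, observing that a convex function on an open interval is automatically locally Lipschitz and that convexity together with strict monotonicity pins the one-sided derivatives strictly between two finite negative constants on any compact interval. The paper instead proves the local Lipschitz continuity of $g$ separately by a direct competitor argument (testing the Rayleigh quotient on $H_{r-h}$ with the shifted, reweighted function $v_{r,h}(x_N)=u_r(x_N+h)e^{-x_Nh/2}e^{-h^2/4}$), which has the bonus of yielding the explicit pointwise bound $\bigl|\tfrac{d}{dr}\lambda_\gamma(H_r)\bigr|\le\sqrt{\lambda_\gamma(H_r)}$ a.e., and then invokes convexity only to control the inverse. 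Your route is therefore more economical, and logically the paper's competitor argument is not needed once convexity is established (though it does buy a clean quantitative derivative estimate). Your caveat about applying Theorem~\ref{th:lieb} to unbounded intervals is fair, but it applies equally to the paper, which takes $C_0=H_{\lfloor r\rfloor}$ and $C_1=H_{\lfloor r\rfloor+1}$, both unbounded; the statement of Theorem~\ref{th:lieb} as recorded in Section~\ref{sec:subbdue} already allows arbitrary nonempty intervals, and if one insists on reducing to the bounded case, your proposed exhaustion by intervals $(-R,r)$ with convergence of the first Dirichlet eigenvalue as $R\to\infty$ is a standard and legitimate fix.
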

\begin{proof}
We start by proving that $g$ is locally Lipschitz continuous. Let $r\in\R$, let $H_r=\{x\in\R^N:\ x_N<r\}$ and let $u_r$ be the solution of
$$
\begin{cases}
-\Delta w+x\cdot \nabla w=\lambda_\gamma(H_r)w\quad\text{in}\ H_r\\
w=0\quad\text{on}\ \partial H_r,
\end{cases}
$$
with $\left\|u_r\right\|_{L^2(H_r,\gamma)}=1$, i.e., $u_r$ is a normalized first eigenfunction relative to $H_r$. Since $u_r$ only depends on $x_N$, we are reduced to the one dimensional case and we may consider $u_r:(-\infty,r]\to[0,+\infty)$ as the solution of
$$
\begin{cases}
-w''(x_N)+x_N w'(x_N)=\lambda_\gamma(H_r)w(x_N)\quad\text{in}\ (-\infty,r)\\
w(r)=0,
\end{cases}
$$
with $\left\|u_r\right\|_{L^2((-\infty,r),\gamma_1)}=1$ so that 
$$
\lambda_\gamma(H_r)=\int_{-\infty}^r|u_r'(x_N)|^2d\gamma_1(x_N). 
$$
For any $h>0$ we set
$$
v_{r,h}(x_N):=u_r(x_N+h)e^{-\frac{x_Nh}{2}}e^{-\frac{h^2}{4}}.
$$
It is easily seen that $\left\|v_{r,h}\right\|_{L^2((-\infty,r-h),\gamma_1)}=1$ for any $h>0$ and
$$
v'_{r,h}(x_N)=u'(x_N+h)e^{-\frac{x_Nh}{2}}e^{-\frac{h^2}{4}}-\frac h2v_{r,h}(x_N).
$$
Using the decreasing monotonicity of $\lambda_\gamma$ with respect to the set inclusion and the variational characterization of $\lambda_\gamma(H_{r-h})$ we get
\begin{align*}\allowdisplaybreaks
\lambda_\gamma(H_r)\leq&\lambda_\gamma(H_{r-h})\le\left\|v'_{r,h}\right\|^2_{L^2((-\infty,r-h),\gamma_1)}
\\
=&e^{-\frac{h^2}{2}}\int_{-\infty}^{r-h}|u'_r(x_N+h)|^2e^{-x_Nh}d\gamma_1(x_N) 
\\
&-he^{-\frac{h^2}{4}}\int_{-\infty}^{r-h}u'_r(x_N+h)e^{-\frac{x_Nh}{2}}v_{r,h}(x_N)d\gamma_1(x_N)+\frac{h^2}{4}
\\
=&\int_{-\infty}^{r-h}|u'_r(x_N+h)|^2\gamma_1(x_N+h)dx_N
\\
&-h\int_{-\infty}^{r-h}u_r(x_N+h)u'_r(x_N+h)\gamma_1(x_N+h)dx_N
+\frac{h^2}{4}
\\
\le&\lambda_\gamma(H_r)+h\left(\int_{-\infty}^ru^2_r(x_N)d\gamma_1(x_N)\right)^{1/2}\left(\int_{-\infty}^r|u'_r(x_N)|^2d\gamma_1(x_N)\right)^{1/2}+\frac{h^2}{4}
\\
=&\lambda_\gamma(H_r)+h\sqrt{\lambda_\gamma(H_r)}+\frac{h^2}{4}.
\end{align*}
Therefore for any $h>0$ we have 
$$
0\le\frac{\lambda_\gamma(H_{r-h})-\lambda_\gamma(H_{r})}{h}\le\sqrt{\lambda_\gamma(H_r)}+\frac{h}{4}.
$$
Since the function $\Lambda(r):=\lambda_\gamma(H_r)$ is strictly monotone then $\Lambda$ is a.e. differentiable in the whole of $\R$ and 
$$
|\Lambda'(r)|\le\sqrt{\Lambda(r)}\quad \text{for a.e. }r\in\R.
$$
By using optimality of the halfspace for $\lambda_\gamma$ we have that
$$
\Lambda(r)=\lambda_\gamma(H_r)=g(\gamma(H_r))=g(\Phi(r))
$$
therefore $g=\Lambda\circ\Phi^{-1}$ and it is locally Lipschitz continuous being the composition of two locally Lipschitz continuous functions.

Now, to prove that also $g^{-1}$ is locally Lipschitz, we make use of Theorem \ref{th:lieb}. 
If we set $v_r(\varrho):=\frac{e^{-\frac{\varrho^2}{4}}}{(2\pi)^{1/4}}u_r(\varrho)$, $\varrho\le r$, we have 
$\left\|v_r\right\|_{L^2(-\infty,r)}=\left\|u_r\right\|_{L^2((-\infty,r),\gamma_1)}=1$. 
Moreover $v_r$ solves
$$
\begin{cases}
-w''(x_N)+\left(\frac{x_N^2}{4}-\frac 12\right)w(x_N)=\Lambda(r)w(x_N)\quad\text{in}\quad(-\infty,r)\\
w(r)=0.
\end{cases}
$$
Therefore, the first Dirichlet eigenvalue of $-\Delta_\gamma$ coincides with the first eigenvalue of the one dimensional Schr\"odinger operator $\mathcal{H}_{V}$, where $V(\rho):=\frac{\rho^2}{4}-\frac 12$ is a convex function in $\R$. Since for any $r\in\R$ there exist two nonempty convex sets $C_0$, $C_1$ such that $H_r=\tau C_1+(1-\tau)C_0$, for some $\tau\in[0,1]$ (choose, for instance, $C_0=H_{\lfloor r \rfloor}$ and $C_1=H_{\lfloor r \rfloor+1}$) using Theorem \ref{th:lieb} we have that $\Lambda(r)=\lambda(\tau(r))$ is a convex function of $r\in\mathbb{R}$ with $\tau=\tau(r)$ given by $\tau(r)=r-\lfloor r\rfloor$. 


Since $\Lambda=g\circ\Phi$, we have that $g^{-1}=\Phi\circ\Lambda^{-1}$. Now $\Phi$ is smooth, and $\Lambda^{-1}$ is monotone decreasing and convex since $\Lambda$ is, and so $\Lambda^{-1}$ is locally Lipschitz. Therefore $g^{-1}$ is locally Lipschitz since it is composition of two locally Lipschitz functions.
\end{proof}

\section{Proof of the Main Theorem}
\label{sec:seztre}

Our strategy to prove the Main Theorem follows the ideas in \cite{BarBraJul, HanNad}: we first estimate $D_\gamma(\Omega)$ from below with a quantity involving the asymmetry of the superlevel sets of $u_\Omega$ and then, in a suitable range of values for the function $u_\Omega$, we show that the asymmetry of the superlevel sets is estimated from below by $\mathcal{A}_\gamma(\Omega)$.  From now on, $u_\Omega$ denotes the normalized nonnegative first eigenfunction for $\lambda_\gamma(\Omega)$.

The following proposition provides an enhanced version of an inequality proved in \cite[Theorem 3.1]{BeChFe}.
In the spirit of \cite{BraCinVit}, given a set $\Omega$, we exploit the sharp Gaussian quantitative isoperimetric inequality proved in \cite{BarBraJul} in order to estimate quantitatively the Gaussian perimeter of the level sets of $u_\Omega$.
\begin{proposition}
	\label{prop:quattroquattronostra}
	Let $\Omega\subset\R^N$ be an open set. For $t>0$, we set
	\begin{equation}\label{Def:mu}
	\Omega_{t}:=\left\{x\in\Omega: u_\Omega(x)>t \right\},\quad \mu(t):=\gamma(\Omega_{t}),
\end{equation}
	and, for any $m\in(0,1)$
	$$
	f(m):=\frac{e^{\frac{\Phi^{-1}(m)^2}{2}}}{1+\Phi^{-1}(m)^2}.
	$$
Then the function $\mu$ is absolutely continuous and for every halfspace $H$ s.t. $\gamma(H)=\gamma(\Omega)$ we have
	\begin{equation}
	D_\gamma(\Omega)=\lambda_\gamma(\Omega)-\lambda_\gamma(H)\ge\frac{1}{2c}
	\int_0^{\infty}f(\mu(t))\mathcal{A}^2_\gamma(\Omega_{t})\frac{I(\mu(t))}{-\mu^{'}(t)}dt,
	\end{equation}
	where c is the absolute constant in \cite[Main Theorem]{BarBraJul}. 
\end{proposition}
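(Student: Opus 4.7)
The plan is to combine a Gaussian Pólya–Szegő-type rearrangement argument with the sharp quantitative isoperimetric inequality \eqref{eq:cifumapr} applied to each super-level set of $u_\Omega$. Throughout I would set $u:=u_\Omega$ for the normalized nonnegative first eigenfunction, so that $\|u\|_{L^2(\Omega,\gamma)}=1$ and $\int_\Omega|\nabla u|^2\,d\gamma=\lambda_\gamma(\Omega)$. The absolute continuity of $\mu$ is a byproduct of the analyticity of $u$ already recorded in the Remark of Subsection~\ref{sec:subdue}: since $\{u=t\}$ carries no Lebesgue mass and $|\nabla u|\in L^2$, the Gaussian co-area formula forces $\mu$ to be absolutely continuous.

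Next I would expand the Dirichlet integral along level sets by the co-area formula in two complementary ways,
$$\int_\Omega|\nabla u|^2\,d\gamma=\int_0^\infty\!\!\int_{\{u=t\}}|\nabla u|\,d\mathcal{H}^{N-1}_\gamma\,dt,\qquad -\mu'(t)=\int_{\{u=t\}}\frac{d\mathcal{H}^{N-1}_\gamma}{|\nabla u|},$$
and apply Cauchy–Schwarz on each level set to obtain $P_\gamma(\Omega_t)^2\le(-\mu'(t))\int_{\{u=t\}}|\nabla u|\,d\mathcal{H}^{N-1}_\gamma$. Integrating in $t$ produces the Gaussian Pólya–Szegő estimate
$$\lambda_\gamma(\Omega)\ge\int_0^\infty\frac{P_\gamma(\Omega_t)^2}{-\mu'(t)}\,dt.$$
Now I would insert \eqref{eq:cifumapr} in the form $P_\gamma(\Omega_t)\ge I(\mu(t))+\tfrac{f(\mu(t))}{4c}\mathcal{A}_\gamma(\Omega_t)^2$ (observing that $\frac{e^{r_t^2/2}}{1+r_t^2}=f(\mu(t))$ for $r_t:=\Phi^{-1}(\mu(t))$), square, and discard the nonnegative quartic remainder. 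This gives
$$P_\gamma(\Omega_t)^2\ge I(\mu(t))^2+\frac{I(\mu(t))\,f(\mu(t))}{2c}\,\mathcal{A}_\gamma(\Omega_t)^2,$$
cleanly separating a leading ``symmetric'' term from the quantitative deviation.

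Finally I would identify the unweighted term with $\lambda_\gamma(H)$. Let $u^*$ be the Ehrhard rearrangement of $u$ along any direction $\omega\in\mathbb{S}^{N-1}$; its super-level sets are halfspaces of Gaussian volume $\mu(t)$, the gradient $|\nabla u^*|$ is constant on each such level set, and $u^*\in H^1_0(H,\gamma)$ with $\|u^*\|_{L^2(H,\gamma)}=1$. Applying the two co-area identities to $u^*$, together with $P_\gamma(\{u^*>t\})=I(\mu(t))$, yields the pointwise relation $|\nabla u^*|(t)=I(\mu(t))/(-\mu'(t))$, so that
$$\int_{\R^N}|\nabla u^*|^2\,d\gamma=\int_0^\infty\frac{I(\mu(t))^2}{-\mu'(t)}\,dt\ge\lambda_\gamma(H),$$
the last inequality coming from the variational characterization \eqref{eq:1eigen}. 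Chaining these pieces produces the asserted lower bound on $D_\gamma(\Omega)$. The delicate point is making sure that Cauchy–Schwarz is used tightly enough that the leading $I^2$-term captures exactly $\int|\nabla u^*|^2\,d\gamma$, with no loss; once this sharpness of the classical Pólya–Szegő step is in place, the quantitative isoperimetric inequality slots in at each level and the remaining manipulations are essentially algebraic.
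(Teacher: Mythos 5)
Your argument is correct and follows the paper's own route essentially step by step: coarea plus Cauchy--Schwarz on level sets to get the layer-cake lower bound $\lambda_\gamma(\Omega)\ge\int_0^\infty P_\gamma(\Omega_t)^2/(-\mu'(t))\,dt$, insertion of the Barchiesi--Brancolini--Julin quantitative isoperimetric inequality at each level to split $P_\gamma(\Omega_t)^2$ into $I(\mu(t))^2$ plus the asymmetry term, and identification of $\int_0^\infty I(\mu(t))^2/(-\mu'(t))\,dt$ with $\int|\nabla u^*|^2\,d\gamma\ge\lambda_\gamma(H)$ via the Ehrhard rearrangement. The only cosmetic deviation is that you invoke the equality $-\mu'(t)=\int_{\{u=t\}}|\nabla u|^{-1}\,d\mathcal{H}^{N-1}_\gamma$ directly from analyticity, while the paper replaces the denominator via the Carlen--Kerce inequality $-\mu'(t)\ge\int_{\{u=t\}}|\nabla u|^{-1}\,d\mathcal{H}^{N-1}_\gamma$ (\cite[Lemma 4.3]{CarKer}); both manipulations land on the same bound, since Cauchy--Schwarz is unaffected by enlarging that integral to $-\mu'(t)$.
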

\begin{proof}
	By the coarea formula and thanks to the regularity of $u_\Omega$ we have that $\mu$ is absolutely continuous and also that
	\begin{equation}
	\label{eq:xder}
	\begin{split}
	\lambda_\gamma(\Omega)&=\int_{\Omega}|\nabla u_\Omega|^2d\gamma=\int_0^{\infty}dt\int_{\left\{u_\Omega=t\right\}}
	|\nabla u_\Omega|d\mathcal{H}_\gamma^{N-1}
	\\
	&\ge\int_0^{\infty}\frac{P_\gamma(\Omega_{t})^2}{\int_{\left\{u_\Omega=t\right\}}\frac{d\mathcal{H}_\gamma^{N-1}}{|\nabla u_\Omega|}}dt,
	\end{split}
	\end{equation}
	where we have used H\"older's inequality with exponents $(2,2)$ to get
	\begin{equation}\label{eq:holdfurba}
	P_\gamma(\Omega_{t})^2
	\le \left(\int_{\partial^\ast \Omega_{t}}|\nabla u_\Omega|\:d\mathcal{H}^{N-1}_\gamma\right)
	\left(\int_{\partial^\ast \Omega_{t}}\frac{d\mathcal{H}^{N-1}_\gamma}{|\nabla u_\Omega|}\right).
	\end{equation}
	We notice that the last integral in the right-hand side of \eqref{eq:holdfurba} is finite since $|\nabla u_\Omega|\ge \kappa_t>0$ on the level set $\partial^*\Omega_t$ for almost every $t\in(0,\|u_\Omega\|_\infty)$.

	Now, we consider the Ehrhard-symmetrized of the set $\Omega_{t}$
	$$
	\Omega_{t}^\ast=\left\{x\in\R^N:\quad u^\ast_\Omega(x)>t \right\}
	$$
	and, from the trivial inequality
	$$
	(P_\gamma(\Omega_{t})-P_\gamma(\Omega^\ast_{t}))^2\ge 0,
	$$
	we easily obtain 
	\begin{equation}
	\label{eq:square}
	P_\gamma(\Omega_{t})^2 \ge P_\gamma(\Omega^\ast_{t})^2+2P_\gamma(\Omega^\ast_{t})(P_\gamma(\Omega_{t})-P_\gamma(\Omega^\ast_{t})).
	\end{equation}
	By using the sharp quantitative Gaussian isoperimetric inequality \eqref{eq:cifumapr} we get
	\begin{equation}\label{eq:cifumapr2}
	P_\gamma(\Omega_t)-P_\gamma(\Omega_t^\ast)\ge 
	\frac{e^{\frac{r_t^2}{2}}}{4c(1+r_t^2)} \mathcal{A}_\gamma(\Omega_t)^2,
	\end{equation}
where $r_t$ is such that $\gamma(\Omega_t)=\Phi(r_t)$ and for some absolute constant $c>0$.
	Inserting \eqref{eq:cifumapr2} in \eqref{eq:square} we get 
	\begin{equation}
	\label{eq:quantsquared}
	P_\gamma(\Omega_{t})^2\ge P_\gamma(\Omega^\ast_{t})^2+\frac{f(\mu(t))}{2c}P_\gamma( \Omega^\ast_{t})\mathcal{A}_\gamma(\Omega_{t})^2.
	\end{equation}
From the equalities 
	$$
	\mu(t)=\gamma(\Omega^*_{t})=
	\int_t^{\infty}ds\int_{\partial \Omega^\ast_{s}}\frac{d\mathcal{H}^{N-1}_\gamma(x)}{|\nabla u_\Omega^\ast|},
	$$
	we deduce 
	\begin{equation}
	\label{eq:carlenkerce}
	\mu'(t)=-\int_{\partial \Omega^\ast_{t}}\frac{d\mathcal{H}^{N-1}_\gamma}{|\nabla u_\Omega^\ast|} \le -\int_{\partial \Omega_{t}}\frac{d\mathcal{H}^{N-1}_\gamma}{|\nabla u_\Omega|},
	\end{equation}
	where the inequality in \eqref{eq:carlenkerce} is proved in \cite[Lemma 4.3]{CarKer}. Inserting \eqref{eq:carlenkerce} and \eqref{eq:quantsquared} into \eqref{eq:xder} yields  
	\begin{equation}\label{eq:primaquantitativa}
	\lambda_\gamma(\Omega)\geq\int_0^{\infty}\frac{P_\gamma(\Omega^\ast_{t})^2}{-\mu^{'}(t)}dt
	+\frac{1}{2c}\int_0^{\infty}f(\mu(t))\frac{P_\gamma(\Omega^\ast_{t})\mathcal{A}_\gamma(\Omega_{t})^2}{-\mu^{'}(t)}dt. 
	\end{equation}	
	Using H\"older's inequality with exponents (2,2) as in \eqref{eq:holdfurba} and taking into account that the functions $|\nabla u_\Omega^\ast|^{1/2}$ and 
	$|\nabla u_\Omega^\ast|^{-1/2}$ are constant on the level plane $\partial \Omega^\ast_{t}$ we obtain  
	\begin{equation}\label{HolderEquality}
	\int_0^{\infty}\frac{P_\gamma(\Omega^\ast_{t})^2}{-\mu^{'}(t)}dt=\int_0^{\infty}\frac{P_\gamma(\Omega^\ast_{t})^2}{\int_{\partial \Omega^\ast_{t}}\frac{d\mathcal{H}^{N-1}_\gamma}{|\nabla u_\Omega^\ast|}}dt= \int_0^{\infty}\left(\int_{\partial \Omega^\ast_{t}}|\nabla u_\Omega^\ast|d\mathcal{H}^{N-1}_\gamma\right)dt. 
	\end{equation}
	By applying the coarea formula we get
	\begin{equation}\label{eq:coareaultima}
	\int_0^{\infty}\left(\int_{\partial \Omega^\ast_{t}}|\nabla u_\Omega^\ast|d\mathcal{H}^{N-1}_\gamma\right)dt=\int_{\Omega}|\nabla u_\Omega^\ast|^2d\gamma.
	\end{equation}
	By plugging \eqref{HolderEquality} and \eqref{eq:coareaultima} into \eqref{eq:primaquantitativa} we finally obtain
	\begin{align*}
	\lambda_\gamma(\Omega)=&\int_{\Omega}|\nabla u_\Omega|^2d\gamma\ge\int_{\Omega}|\nabla u^*_\Omega|^2 d\gamma+\frac{1}{2c}\int_0^{\infty}f(\mu(t))\frac{P_\gamma(\Omega^\ast_{t})\mathcal{A}_\gamma(\Omega_{t})^2}{-\mu^{'}(t)}dt
	\\
	\ge&\lambda_\gamma(H)+\frac{1}{2c}\int_0^{\infty}f(\mu(t))\frac{P_\gamma(\Omega^\ast_{t})\mathcal{A}_\gamma(\Omega_{t})^2}{-\mu^{'}(t)}dt,
	\end{align*}
	hence, recalling that $\gamma(H)=\gamma(\Omega)$ and $P_\gamma(\Omega^\ast_{t})=I(\gamma(\Omega^\ast_{t}))$, we get the thesis.
\end{proof}

The next lemma, proved in \cite[Lemma 4.2]{CCLP} (see also \cite[Lemma 2.8]{BraDeP} for a more general case) roughly says that if we know how asymmetric a set is and we consider another set which is not too different (in the measure sense) from the first one, then the asymmetry of the second set can be controlled from below by the asymmetry of the first one.
\begin{lemma}
	\label{lem:quattrounonostro}
	Let $E, F\subset\R^N$ be two measurable sets such that
	\begin{equation}
	\label{eq:transferofasym}
	\frac{\gamma(F\triangle E)}{\gamma(F)}\leq \kappa\mathcal{A}_\gamma(F),
	\end{equation}
	for some $0<\kappa<1/2$. Then
	$$
	\mathcal{A}_\gamma(E) \ge \frac{1-2\kappa}{c_\kappa}\mathcal{A}_\gamma(F),
	$$
	where $c_\kappa:=\begin{cases}\begin{array}{ll}
	1, &\text{if}\quad \gamma(E\setminus F)=0, \\
	1+2\kappa,\quad&\text{if}\quad \gamma(E\setminus F)>0.
	\end{array}\end{cases}$
\end{lemma}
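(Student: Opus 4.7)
The plan is to transfer a near-optimal halfspace from $E$ back to $F$ via the triangle inequality for symmetric differences. First I would fix a halfspace $H_E=H_{\omega,r_E}$ achieving the asymmetry of $E$, so that $\gamma(H_E)=\gamma(E)$ and $\mathcal{A}_\gamma(E)=\gamma(E\triangle H_E)/\gamma(E)$. Next I would introduce the halfspace $H_F=H_{\omega,r_F}$ with the \emph{same} direction $\omega$ and $\gamma(H_F)=\gamma(F)$. The key geometric observation is that, since $H_E$ and $H_F$ share the normal, $H_E\triangle H_F$ is a slab perpendicular to $\omega$, and consequently
\[
\gamma(H_E\triangle H_F)=\lvert\Phi(r_E)-\Phi(r_F)\rvert=\lvert\gamma(E)-\gamma(F)\rvert\le\gamma(E\triangle F).
\]

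Combining the triangle inequality
\[
\gamma(F\triangle H_F)\le\gamma(F\triangle E)+\gamma(E\triangle H_E)+\gamma(H_E\triangle H_F)\le 2\gamma(E\triangle F)+\gamma(E\triangle H_E)
\]
with the bound $\mathcal{A}_\gamma(F)\le\gamma(F\triangle H_F)/\gamma(F)$ coming from the definition of the asymmetry as a minimum, and then invoking the standing hypothesis \eqref{eq:transferofasym}, a division by $\gamma(F)$ would yield
\[
\mathcal{A}_\gamma(F)\le 2\kappa\,\mathcal{A}_\gamma(F)+\frac{\gamma(E)}{\gamma(F)}\,\mathcal{A}_\gamma(E),
\]
which after rearranging becomes $(1-2\kappa)\,\mathcal{A}_\gamma(F)\le(\gamma(E)/\gamma(F))\,\mathcal{A}_\gamma(E)$.

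The last step is to control the ratio $\gamma(E)/\gamma(F)$ by a case analysis matching the two values of $c_\kappa$ in the statement. If $\gamma(E\setminus F)=0$, then $E\subset F$ up to a negligible set, so $\gamma(E)\le\gamma(F)$, producing $c_\kappa=1$. If on the other hand $\gamma(E\setminus F)>0$, I would use the trivial universal bound $\mathcal{A}_\gamma(F)\le 2$ (both $F$ and $H$ being probability subsets) together with the hypothesis to obtain $\gamma(E)\le\gamma(F)+\gamma(E\triangle F)\le(1+2\kappa)\gamma(F)$, producing $c_\kappa=1+2\kappa$. The only real subtlety is the choice of $H_F$ aligned with $H_E$: this is permissible precisely because $\mathcal{A}_\gamma(F)$ is a minimum over all directions and $H_F$ need only be \emph{some} halfspace with the right Gaussian measure to serve as an upper bound. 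Beyond that, the proof is bookkeeping with the triangle inequality, and I do not expect any serious obstacle.
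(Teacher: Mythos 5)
Your proof is correct. Every step checks out: the choice of $H_E$ realizing the asymmetry of $E$ with normal $\omega$, the companion halfspace $H_F$ with the same normal and $\gamma(H_F)=\gamma(F)$, the identity $\gamma(H_E\triangle H_F)=|\gamma(E)-\gamma(F)|\le\gamma(E\triangle F)$ (true because nested halfspaces with a common normal), the two applications of the triangle inequality for symmetric differences, the substitution of the hypothesis and of $\gamma(E\triangle H_E)=\gamma(E)\,\mathcal{A}_\gamma(E)$, and the two cases controlling $\gamma(E)/\gamma(F)$ (with $\gamma(E)\le\gamma(F)$ when $\gamma(E\setminus F)=0$, and $\gamma(E)\le\gamma(F)+\gamma(E\triangle F)\le(1+2\kappa)\gamma(F)$ otherwise, using $\mathcal{A}_\gamma(F)\le 2$).

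Note that the paper does not contain its own proof of this lemma: it cites \cite[Lemma 4.2]{CCLP} (the authors' earlier paper) and \cite[Lemma 2.8]{BraDeP}. Your argument is the natural one and follows the same template used there, namely transferring an optimal halfspace from $E$ to a parallel competitor for $F$ and bookkeeping with the triangle inequality. No gap.
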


Now our goal is to prove that
\begin{equation}
\label{eq:quantisoine}
D_\gamma(\Omega)=\lambda_\gamma(\Omega)-\lambda_\gamma(H)\ge C\mathcal{A}_\gamma(\Omega)^3,
\end{equation}
where $H$ is a halfspace such that $\gamma(H)=\gamma(\Omega)$.
We also observe that if $\lambda_\gamma(\Omega)\ge2\lambda_\gamma(H)$, then by using that $\mathcal{A}_\gamma(\Omega)<2$
$$
\lambda_\gamma(\Omega)-\lambda_\gamma(H)\ge\lambda_\gamma(H)>\lambda_\gamma(H)\frac{\mathcal{A}_\gamma(\Omega)^3}{8}.
$$
Therefore, we are reduced to considering the case
\begin{equation}
\label{eq:assumpdoubleperimeter}
\lambda_\gamma(\Omega)<2\lambda_\gamma(H).
\end{equation}
We are now ready to prove our quantitative Faber-Krahn inequality.

\begin{proof}[Proof of the Main Theorem]
Let us set 
$$
T:=\sup\left\{t>0\,:\,\gamma\left(\Omega_t\right)\ge\gamma(\Omega)\left(1-\frac 14\mathcal{A}_\gamma(\Omega)\right)\right\},
$$
which depends on the open set $\Omega$, and  
$$
T_0:=\frac{\beta}{4(1+\beta)}\mathcal{A}_\gamma(\Omega)\gamma(\Omega),
$$
for some $\beta>0$ that we choose in the sequel. Notice that $T_0<\frac 12$.

We suppose that $T\le T_0$ and we recall that $\Omega_T=\{u_\Omega>T\}$. Obviously, $\Omega_T$ is open since $u_\Omega$ is continuous in $\Omega$, and it is not empty. Indeed, from
$$
(u_\Omega-T)_+\ge u_\Omega-T,
$$
$\left\|u\right\|_{L^2(\Omega,\gamma)}=1$ and the Minkowski inequality, we deduce $\Omega_T$ has positive measure
\begin{equation}
\label{eq:nonempty}
\left\|(u_\Omega-T)_+\right\|_{L^2(\Omega_T,\gamma)}=\left\|(u_\Omega-T)_+\right\|_{L^2(\Omega,\gamma)}
\ge\left\|u\right\|_{L^2(\Omega,\gamma)}-T\sqrt{\gamma(\Omega)}\ge 1-T>0.
\end{equation}
As $(u_\Omega-T)_+$ is a competitor in the variational characterization \eqref{eq:1eigen} of $\lambda_\gamma(\Omega_T)$, we have 
\begin{equation}
\label{eq:fkra}
\lambda_\gamma(\Omega_T)\le\frac{\left\|\nabla(u_\Omega-T)_+\right\|^2_{L^2(\Omega_T,\gamma)}}{\left\|(u_\Omega-T)_+\right\|^2_{L^2(\Omega_T,\gamma)}}.
\end{equation}
From  
\begin{equation}
\label{eq:ottodec}
\left\|\nabla(u_\Omega-T)_+\right\|^2_{L^2(\Omega_T,\gamma)}\le\left\|\nabla u_\Omega\right\|^2_{L^2(\Omega,\gamma)}=\lambda_\gamma(\Omega),
\end{equation}
we infer
\begin{equation}
\label{eq:rtq}
\lambda_\gamma(\Omega)\ge\lambda_\gamma(\Omega_T)\left\|(u_\Omega-T)_+\right\|^2_{L^2(\Omega_T,\gamma)}\ge g\left(\gamma(\Omega_T)\right)\frac{\lambda_\gamma(H)}{g(\gamma(H))}\left\|(u_\Omega-T)_+\right\|^2_{L^2(\Omega_T,\gamma)},
\end{equation}
where in the first inequality we have used both \eqref{eq:fkra} and \eqref{eq:ottodec}, and in the second one we have exploited \eqref{eq:analyticfk}. 

By the definition of $T$ and the continuity of the application $[0,T]\ni t\mapsto \gamma(\Omega_t)\in(0,\gamma(\Omega)]$ we get $\gamma(\Omega_T)=\gamma(\Omega)\left(1-\frac 14\mathcal{A}_\gamma(\Omega)\right)$ where $\gamma(\Omega_T)\in\left(\frac12\gamma(\Omega),\gamma(\Omega)\right]$ since $\mathcal{A}_\gamma(\Omega)<2$. By using that $g$ is monotone decreasing and Proposition \ref{prop:bilipschitz} and denoting by $L_{\gamma(\Omega)}$ the biggest constant $L$ such that $g(a)-g(b)\geq L(b-a)$ for $a<b$ in the interval $\left(\frac12\gamma(\Omega),\gamma(\Omega)\right]$ we obtain
\begin{equation}
\label{eq:cfgh}
\begin{split}
g(\gamma(\Omega_T))&\ge g(\gamma(\Omega))+L_{\gamma(\Omega)}\left(\gamma(\Omega)-\gamma(\Omega_T)\right)\\
&= g(\gamma(\Omega))+L_{\gamma(\Omega)}\frac{\gamma(\Omega)}{4}\mathcal{A}_\gamma(\Omega).
\end{split}
\end{equation}
Inserting \eqref{eq:cfgh} in \eqref{eq:rtq} we have 
$$
\lambda_\gamma(\Omega)\ge\frac{\lambda_\gamma(H)}{g(\gamma(H))}\left(g(\gamma(\Omega))+L_{\gamma(\Omega)}\frac{\gamma(\Omega)}{4}\mathcal{A}_\gamma(\Omega)\right)\left\|(u_\Omega-T)_+\right\|^2_{L^2(\Omega_T,\gamma)}.
$$
Once we notice that
$$
\frac{g(\gamma(\Omega))}{g(\gamma(H))}=1
$$
and set 
$$
\frac{L_{\gamma(\Omega)}\gamma(\Omega)}{4 g(\gamma(H))}:=\beta>0,
$$
putting together the previous estimates we obtain
$$
\lambda_\gamma(\Omega)\ge\lambda_\gamma(H)(1+\beta\mathcal{A}_\gamma(\Omega))\left\|(u_\Omega-T)_+\right\|^2_{L^2(\Omega_T,\gamma)}.
$$
Using \eqref{eq:nonempty} and $\gamma(\Omega)<1$, we get 
$$
\left\|(u_\Omega-T)_+\right\|^2_{L^2(\Omega_T,\gamma)}\ge(1-T)^2\ge 1-2T_0\ge 1-\frac{\beta}{2(1+\beta)}\mathcal{A}_\gamma(\Omega),
$$
and so
$$
\lambda_\gamma(\Omega)\ge\lambda_\gamma(H)(1+\beta\mathcal{A}_\gamma(\Omega))\left(1-\frac{\beta}{2(1+\beta)}\mathcal{A}_\gamma(\Omega)\right),
$$
but since $\mathcal{A}_\gamma(\Omega)<2$ it is straightforward to see that
$$
(1+\beta\mathcal{A}_\gamma(\Omega))\left(1-\frac{\beta}{2(1+\beta)}\mathcal{A}_\gamma(\Omega)\right)\ge 1+\frac{\beta}{2(1+\beta)}\mathcal{A}_\gamma(\Omega),
$$
and this yields
$$
\lambda_\gamma(\Omega)-\lambda_\gamma(H)\ge\frac{\beta}{2(1+\beta)}\lambda_\gamma(H)\mathcal{A}_\gamma(\Omega)>\frac{\beta}{8(1+\beta)}\lambda_\gamma(H)\mathcal{A}_\gamma(\Omega)^3.
$$
Now we suppose that $T>T_0$. 
From Proposition \ref{prop:quattroquattronostra} and Lemma \ref{lem:quattrounonostro} (applied, for any $t\in[0,T]$, with $F=\Omega$, $E=\Omega_t$ and $\kappa=\frac14$) we get 
\begin{equation*}
\begin{split}
\lambda_\gamma(\Omega)-\lambda_\gamma(H)&\ge \frac{1}{2c}\int_0^{\infty}f(\mu(t))\mathcal{A}_\gamma(\Omega_{t})^2\frac{I(\mu(t))}{-\mu'(t)}dt \\
&\ge\frac{1}{2c}\int_{0}^{T}f(\mu(t))\mathcal{A}_\gamma(\Omega_{t})^2\frac{I(\mu(t))}{-\mu'(t)}dt \\
&\ge\frac{1}{2c}\cdot\frac14\mathcal{A}_\gamma(\Omega)^2\int_{0}^{T}f(\mu(t))\frac{I(\mu(t))}{-\mu'(t)}dt \\
&\ge\frac{\mathcal{A}_\gamma(\Omega)^2}{8c}\frac{e^{r^2/2}}{1+r^2}\int_{0}^{T}\frac{I(\mu(t))}{-\mu'(t)}dt \\
&\ge\frac{\mathcal{A}_\gamma(\Omega)^2}{8c}\frac{1}{1+r^2}\int_0^T\frac{dt}{-\mu'(t)},
\end{split}
\end{equation*}
where in the last two inequalities we respectively used the facts that $f(\mu(t))\geq \frac{e^{r^2/2}}{1+r^2}$ and $I(\mu(t))\ge e^{-r^2/2}$, where $r=\Phi^{-1}(\gamma(\Omega))$, since $\mu(t)\in\left(\frac12\gamma(\Omega),\gamma(\Omega)\right]$ for every $t\in\left[0,T\right]$.

 
This in turn implies that
\begin{equation}
\label{eq:jensen}
\lambda_{\gamma}(\Omega)-\lambda_{\gamma}(H)\ge\frac{\mathcal{A}_\gamma(\Omega)^2}{8c(1+r^2)}\int_{0}^{T}\frac{dt}{-\mu^{'}(t)}.
\end{equation}
We estimate the integral in the right-hand side of \eqref{eq:jensen} through Jensen's inequality
\begin{equation}
\label{eq:postjensen}
\int_{0}^{T}\frac{dt}{-\mu'(t)}\ge T^2
\left(\int_{0}^{T}-\mu^{'}(t)dt\right)^{-1}\ge
{T^2}\left(\gamma\left(\Omega\right)-\gamma\left(\Omega_{T}\right)\right)^{-1}=\frac{4T^2}{\gamma(\Omega)\mathcal{A}_\gamma(\Omega)},
\end{equation}
where in the last equality we used the definition of $T$. Summarizing, if we put \eqref{eq:postjensen} in \eqref{eq:jensen} we get
\begin{equation*}
\begin{split}
\lambda_{\gamma}(\Omega)-\lambda_{\gamma}(H)&\ge\frac{\mathcal{A}_\gamma(\Omega)^2}{8c(1+r^2)}\frac{4T^2}{\gamma(\Omega)\mathcal{A}_\gamma(\Omega)}\\
&=\frac{\mathcal{A}_\gamma(\Omega)}{2c(1+r^2) \gamma(\Omega)}T^2,
\end{split}
\end{equation*}
and recalling that
$$
T^{2}>(T_0)^{2}=\frac{C_\beta}{16}\mathcal{A}_\gamma(\Omega)^{2}\gamma(\Omega)^{2},
$$
we conclude that
\begin{equation}
\label{eq:conclusion}
\lambda_{\gamma}(\Omega)-\lambda_{\gamma}(H)\ge\frac{\gamma(\Omega)C_\beta}{32 c(1+r^2)}\mathcal{A}^3_\gamma(\Omega),
\end{equation}
where $C_\beta:=\left(\frac{\beta}{\beta+1}\right)^{2}$.
\end{proof}

\begin{bibdiv}
	\begin{biblist}
		
		\bib{AcFuMo}{article}{
			author={Acerbi, E.},
			author={Fusco, N.},
			author={Morini, M.},
			title={Minimality via second variation for a nonlocal isoperimetric
				problem},
			journal={Comm. Math. Phys.},
			volume={322},
			date={2013},
			number={2},
			pages={515--557},
		}

		\bib{AFP}{book}{
			author={Ambrosio, L.},
			author={Fusco, N.},
			author={Pallara, D.},
			title={Functions of bounded variation and free discontinuity problems},
			series={Oxford Mathematical Monographs},
			publisher={The Clarendon Press, Oxford University Press, New York},
			date={2000},
			pages={xviii+434},
		}
	
	\bib{AmbMon}{article}{
		author={Ambrosio, L.},
		author={Mondino, A.},
		title={Gaussian-type isoperimetric inequalities in $\text{RCD}(K,\infty)$
			probability spaces for positive $K$},
		journal={Atti Accad. Naz. Lincei Rend. Lincei Mat. Appl.},
		volume={27},
		date={2016},
		number={4},
		pages={497--514},
	}
		
		\bib{BarBraJul}{article}{
			author={Barchiesi, M.},
			author={Brancolini, A.},
			author={Julin, V.},
			title={Sharp dimension free quantitative estimates for the Gaussian
				isoperimetric inequality},
			journal={Ann. Probab.},
			volume={45},
			date={2017},
			number={2},
			pages={668--697},
		}
	
	\bib{BeChFe}{article}{
		author={Betta, M. F.},
		author={Chiacchio, F.},
		author={Ferone, A.},
		title={Isoperimetric estimates for the first eigenfunction of a class of
			linear elliptic problems},
		journal={Z. Angew. Math. Phys.},
		volume={58},
		date={2007},
		number={1},
		pages={37--52},
	}

\bib{Bhatt}{article}{
	author={Bhattacharya, T.},
	title={Some observations on the first eigenvalue of the $p$-Laplacian and
		its connections with asymmetry},
	journal={Electron. J. Differential Equations},
	date={2001},
	pages={No. 35, 15},
}

\bib{BhaWei}{article}{
	author={Bhattacharya, T.},
	author={Weitsman, A.},
	title={Estimates for Green's function in terms of asymmetry},
	conference={
		title={Applied analysis},
		address={Baton Rouge, LA},
		date={1996},
	},
	book={
		series={Contemp. Math.},
		volume={221},
		publisher={Amer. Math. Soc., Providence, RI},
	},
	isbn={0-8218-0673-4},
	date={1999},
	pages={31--58},
}
		
		\bib{borell}{article}{
			author={Borell, C.},
			title={The Brunn-Minkowski inequality in Gauss space},
			journal={Invent. Math.},
			volume={30},
			date={1975},
			number={2},
			pages={207--216},
		}
			
		\bib{Lieb}{article}{
			author={Brascamp, H.J.},
			author={Lieb, E.H.},
			title={Some inequalities for Gaussian measures and the long-range order of the one-dimensional plasma},
			note={},
			journal={in: A.M. Arthurs (ed.) Functional integration and its applications, Clarendon Press, 1975},
			date={},
			pages={and also: M. Loss and M.B. Ruskai (eds) Inequalities, Selecta of Elliott H. Lieb, Springer, 2002, 403-416},
		}
		
	\bib{BraCinVit}{article}{
			author={Brasco, L.},
			author={Cinti, E.},
			author={Vita, S.},
			title={A quantitative stability estimate for the fractional Faber-Krahn
				inequality},
			journal={J. Funct. Anal.},
			volume={279},
			date={2020},
			number={3},
			pages={Paper No. 108560, 49 pages},
		}
	
	\bib{BraDeP}{article}{
		author={Brasco, L.},
		author={De Philippis, G.},
		title={Spectral inequalities in quantitative form},
		conference={
			title={in: A. Henrot (ed.) Shape optimization and spectral theory},
		},
		book={
			publisher={De Gruyter Open, Warsaw},
		},
		date={2017},
		pages={201--281},
	}
	
	\bib{BrDeVe}{article}{
		author={Brasco, L.},
		author={De Philippis, G.},
		author={Velichkov, B.},
		title={Faber-Krahn inequalities in sharp quantitative form},
		journal={Duke Math. J.},
		volume={164},
		date={2015},
		number={9},
		pages={1777--1831},
	}

		\bib{CCLP}{article}{
		author={Carbotti, A.},
		author={Cito, S.},
		author={La Manna, D. A.},
		author={Pallara, D.},
		title={A quantitative dimension free isoperimetric inequality for the Gaussian fractional perimeter},
		journal={To appear on Communications in Analysis and Geometry},
		date = {2024},
		eprint={https://arxiv.org/pdf/2011.10451.pdf},
	}

		\bib{CarKer}{article}{
			author={Carlen, E. A.},
			author={Kerce, C.},
			title={On the cases of equality in Bobkov's inequality and Gaussian
				rearrangement},
			journal={Calc. Var. Partial Differential Equations},
			volume={13},
			date={2001},
			number={1},
			pages={1--18},
		}
	
	\bib{ChiDib}{article}{
		author={Chiacchio, F.},
		author={Di Blasio, G.},
		title={Isoperimetric inequalities for the first Neumann eigenvalue in
			Gauss space},
		journal={Ann. Inst. H. Poincar\'{e} C Anal. Non Lin\'{e}aire},
		volume={29},
		date={2012},
		number={2},
		pages={199--216},
	}
	
	\bib{ChiGav}{article}{
		author={Chiacchio, F.},
		author={Gavitone, N.},
		title={The Faber-Krahn inequality for the Hermite operator with Robin
			boundary conditions},
		journal={Math. Ann.},
		volume={384},
		date={2022},
		number={1-2},
		pages={789--804},
	}
		
		\bib{CiFuMaPr}{article}{
			author={Cianchi, A.},
			author={Fusco, N.},
			author={Maggi, F.},
			author={Pratelli, A.},
			title={On the isoperimetric deficit in Gauss space},
			journal={Amer. J. Math.},
			volume={133},
			date={2011},
			number={1},
			pages={131--186},
		}
	
\bib{CicLeo}{article}{
	author={Cicalese, M.},
	author={Leonardi, G. P.},
	title={A selection principle for the sharp quantitative isoperimetric
		inequality},
	journal={Arch. Ration. Mech. Anal.},
	volume={206},
	date={2012},
	number={2},
	pages={617--643},
}

		\bib{EhrScand}{article}{
			author={Ehrhard, A.},
			title={Sym\'{e}trisation dans l'espace de Gauss},
			journal={Math. Scand.},
			volume={53},
			date={1983},
			number={2},
			pages={281--301},
		}
	
	\bib{ehrhard}{article}{
		author={Ehrhard, A.},
		title={In\'{e}galit\'{e}s isop\'{e}rim\'{e}triques et int\'{e}grales de Dirichlet
			gaussiennes},
		journal={Ann. Sci. \'{E}cole Norm. Sup. (4)},
		volume={17},
		date={1984},
		number={2},
		pages={317--332},
	}
	
	\bib{eldan}{article}{
		author={Eldan, R.},
		title={A two-sided estimate for the Gaussian noise stability deficit},
		journal={Invent. Math.},
		volume={201},
		date={2015},
		number={2},
		pages={561--624},
	}

	\bib{Faber}{article}{
	author={Faber, G.},
	title={Beweiss dass unter allen homogenen Membranen von gleicher Fl\'{a}che und gleicher Spannung
		die kreisf\"{o}rmgige den leifsten Grundton gibt},
	journal={Sitz Bayer Acad. Wiss.},
	date={1923},
	pages={169--172},
}
		
		\bib{Fusco}{article}{
			author={Fusco, N.},
			title={The quantitative isoperimetric inequality and related topics},
			journal={Bull. Math. Sci.},
			volume={5},
			date={2015},
			number={3},
			pages={517--607},
		}

\bib{FusLam}{article}{
  title={Some weighted isoperimetric inequalities in quantitative form},
  author={Fusco, N },
  author={La Manna, D. A.},
  journal={Journal of Functional Analysis},
  volume={285},
  number={2},
  pages={109946},
  year={2023},
  publisher={Elsevier}
}


\bib{FuMaPr2}{article}{
	author={Fusco, N.},
	author={Maggi, F.},
	author={Pratelli, A.},
	title={Stability estimates for certain Faber-Krahn, isocapacitary and
		Cheeger inequalities},
	journal={Ann. Sc. Norm. Super. Pisa Cl. Sci. (5)},
	volume={8},
	date={2009},
	number={1},
	pages={51--71},
}

\bib{HanNad}{article}{
   author={Hansen, W.},
   author={Nadirashvili, N.},
   title={Isoperimetric inequalities in potential theory},
   journal={Potential Anal.},
   volume={3},
   date={1994},
   number={1},
   pages={1--14},
}
		
	\bib{JulSar}{article}{
		author={Julin, V.},
		author={Saracco, G.},
		title={Quantitative lower bounds to the Euclidean and the Gaussian
			Cheeger constants},
		journal={Ann. Fenn. Math.},
		volume={46},
		date={2021},
		number={2},
		pages={1071--1087},
	}	
	
		\bib{Krahn}{article}{
		author={Krahn, E.},
		title={Über eine von Rayleigh formulierte Minimaleigenschaftdes Kreises},
		journal={Math. Annalen},
		date={1925},
		number={94},
		pages={97--100},
	}

\bib{Melas}{article}{
	author={Melas, A. D.},
	title={The stability of some eigenvalue estimates},
	journal={J. Differential Geom.},
	volume={36},
	date={1992},
	number={1},
	pages={19--33},
}

\bib{nadir}{article}{
	author={Nadirashvili, N.},
	title={Conformal maps and isoperimetric inequalities for eigenvalues of
		the Neumann problem},
	conference={
		title={in: L. Zalcman (ed.) Proceedings of the Ashkelon Workshop on Complex Function Theory
			(1996)},
	},
	book={
		series={Israel Math. Conf. Proc.},
		volume={11},
		publisher={Bar-Ilan Univ., Ramat Gan},
	},
	date={1997},
	pages={197--201},
}
		
		\bib{NovPalSir}{article}{
			author={Novaga, M.},
			author={Pallara, D.},
			author={Sire, Y.},
			title={A fractional isoperimetric problem in the Wiener space},
			journal={J. Anal. Math.},
			volume={134},
			date={2018},
			number={2},
			pages={787--800},
		}
		
		\bib{SudCir}{article}{
			author={Sudakov, V. N.},
			author={Tsirelson, B. S.},
			title={Extremal properties of half-spaces for spherically invariant
				measures},
			language={Russian},
			note={in: V.N. Sudakov (ed.) Problems in the theory of probability distributions, II},
			journal={Zap. Nau\v{c}n. Sem. Leningrad. Otdel. Mat. Inst. Steklov. (LOMI)},
			volume={41},
			date={1974},
			pages={14--24, 165},
		}
		
\end{biblist}
\end{bibdiv}
\end{document}